\documentclass[10pt]{article}
\usepackage{amssymb}
\usepackage{amsmath}
\usepackage{amsfonts}
\usepackage{amsthm}
\usepackage{graphics,graphicx}
\usepackage{float}
\usepackage{mathtools}
\usepackage[dvipsnames]{xcolor}
\usepackage{url}

\usepackage[english]{babel}
\usepackage{authblk}

\sloppy
\newtheorem{theorem}{Theorem}
\newtheorem{lemma}{Lemma}

\newtheorem{corollary}{Corollary}

\newtheorem{remark}{Remark}

\newtheorem{o-problem}{Open problem}

\title{An upper bound on the number of relevant variables for Boolean functions on the Hamming graph}
\author[1]{Alexandr Valyuzhenich\thanks{Postdoctoral Research Station of Mathematics, Hebei Normal University, Shijiazhuang 050024, PR China; Email address: graphkiper@mail.ru}}


\date{}
\begin{document}
\maketitle

\begin{abstract}
The spectrum of a complex-valued function $f$ on $\mathbb{Z}_{q}^n$ is the set
$\{|u|:u\in \mathbb{Z}_q^n~\mathrm{and}~\widehat{f}(u)\neq 0\}$, where
$|u|$ is the Hamming weight of $u$ and $\widehat{f}$ is the Fourier transform of $f$.
Let $1\leq d'\leq d\leq n$. 
In this work, we study Boolean functions on $\mathbb{Z}_{q}^n$, $q\geq 3$, whose spectrum is a subset of $\{0\}\cup \{d',\ldots,d\}$.
We prove that such functions have at most $\frac{d}{2}\cdot \frac{q^{d+d'}}{2^{d'}(q-1)^{d'}}$ relevant variables for $d'+d\leq n+1$.
In particular, we prove that any Boolean function of degree $d$ on $\mathbb{Z}_{q}^n$, $q\geq 3$, has at most $\frac{dq^{d+1}}{4(q-1)}$ relevant variables.
We also show that any equitable 2-partition of the Hamming graph $H(n,q)$, $q\geq 3$, associated with the eigenvalue $n(q-1)-qd$ has at most 
$\frac{d}{2}\cdot \frac{q^{2d}}{2^d(q-1)^{d}}$ relevant variables for $d\leq \frac{n+1}{2}$.
\end{abstract}

\section{Introduction}\label{Sec:Intro}
The {\em Hamming graph} $H(n,q)$ is defined as follows.
The vertex set of $H(n,q)$ is $\mathbb{Z}_{q}^n$, and two vertices are adjacent if they differ in exactly one position.
The {\em spectrum} of a function $f:\mathbb{Z}_{q}^n\rightarrow \mathbb{C}$ is the set 
$\{|u|:u\in \mathbb{Z}_q^n~\mathrm{and}~\widehat{f}(u)\neq 0\}$, where
$|u|$ is the Hamming weight of $u$ and $\widehat{f}$ is the Fourier transform of $f$.
The {\em degree} of a function $f:\mathbb{Z}_{q}^n\rightarrow \mathbb{C}$ is the largest element of the spectrum of $f$.
Given a function $f$ on $\mathbb{Z}_{q}^n$, a variable $x_i$, $1\leq i\leq n$, is called {\em relevant} (or {\em essential}) if there exist
$a_1,\ldots,a_{i-1},a_{i+1},\ldots,a_n\in \mathbb{Z}_q$ and $b_1,b_2\in \mathbb{Z}_q$ such that
$$f(a_1,\ldots,a_{i-1},b_1,a_{i+1},\ldots,a_n)\neq f(a_1,\ldots,a_{i-1},b_2,a_{i+1},\ldots,a_n).$$

A classical theorem of Nisan and Szegedy \cite{NS94} states that any Boolean function of degree $d$ on $\mathbb{Z}_{2}^n$ has at most $d\cdot 2^{d-1}$ relevant variables.
In 2020, Chiarelli, Hatami and Saks \cite{CHS20} improved the bound to $6.614\cdot 2^{d}$,
and then Wellens \cite{W22} improved it further to $4.394\cdot 2^{d}$.
The same problem for Boolean functions of degree $d$ on the Johnson graph was studied in \cite{FI19J,F23}.

In this work, we consider Boolean functions of a bounded degree on $\mathbb{Z}_{q}^n$ 
(or, equivalently, on the Hamming graph $H(n,q)$) for $q\geq 3$.
Any Boolean function of degree $1$ on $\mathbb{Z}_{q}^n$ has at most one relevant variable.
This fact is a very special case of the result obtained by Meyerowitz in \cite{Mey03}.
Another proof of this fact can be found in \cite{FI19}.
Boolean functions of degree $2$ on $\mathbb{Z}_{q}^n$  whose spectrum is $\{0,2\}$ were studied in \cite{MV20,MTV24}.
In particular, Mogilnykh and Valyuzhenich \cite{MV20} showed that such functions have at most $\max (4,\frac{q}{2}+1)$ relevant variables.
The remark of Filmus and Ihringer at the end of \cite{FI19J} and Wellens's result imply that any Boolean function of degree $d$ on $\mathbb{Z}_{q}^n$ has at most 
$4.394\cdot 2^{\lceil \log_2 q\rceil d}$ relevant variables.

Let $1\leq d'\leq d\leq n$. 
In this work, we study Boolean functions on $\mathbb{Z}_{q}^n$, $q\geq 3$, whose spectrum is a subset of $\{0\}\cup \{d',\ldots,d\}$.
We prove that such functions have at most $\frac{d}{2}\cdot \frac{q^{d+d'}}{2^{d'}(q-1)^{d'}}$ relevant variables for $d'+d\leq n+1$.
In particular, we prove that any Boolean function of degree $d$ on $\mathbb{Z}_{q}^n$, $q\geq 3$, has at most $\frac{dq^{d+1}}{4(q-1)}$ relevant variables.
This bound improves the bound $4.394\cdot 2^{\lceil \log_2 q\rceil d}$ in the following cases:

\begin{itemize}
  \item $q\in \{3,5,6,7\}$;
  \item $2^{k-1}<q\leq 15\cdot 2^{k-4}$, where $k\geq 4$.
\end{itemize}

Boolean functions on $\mathbb{Z}_{q}^n$ whose spectrum is $\{0,d\}$ are in one-to-one correspondence with
equitable $2$-partitions of $H(n,q)$ associated with the eigenvalue $n(q-1)-qd$.
Therefore, our result for $d'=d$ implies that any equitable 2-partition of $H(n,q)$, $q\geq 3$, associated with the eigenvalue $n(q-1)-qd$ has at most 
$\frac{d}{2}\cdot \frac{q^{2d}}{2^d(q-1)^{d}}$ relevant variables for $d\leq \frac{n+1}{2}$.

The paper is organized as follows. In Section \ref{Sec:Def}, we introduce basic definitions.
In Section \ref{Sec:Prelim}, we give preliminary results.
In Section \ref{Sec:Main}, we prove the main results.

\section{Basic definitions}\label{Sec:Def}

For a positive integer $n$, denote $[n]=\{1,\ldots,n\}$.
The weight of a vector $u\in \mathbb{Z}_{q}^n$, denoted by $|u|$, is the number of its non-zero coordinates.
The support of a function $f$ is denoted by $S(f)$.

The {\em Hamming graph} $H(n,q)$ is defined as follows.
The vertex set of $H(n,q)$ is $\mathbb{Z}_{q}^n$, and two vertices are adjacent if they differ in exactly one coordinate.
This graph has $n+1$ distinct eigenvalues $\lambda_k(n,q)=n(q-1)-q\cdot k$, where $0\leq k\leq n$.
Denote by $E_{n,q}$ and $A_{n,q}$ the edge set and the adjacency operator of $H(n,q)$.

The set of all complex-valued functions on $\mathbb{Z}_{q}^n$, denoted by $U(n,q)$, forms a vector space over $\mathbb{C}$.
Define an inner product on this vector space as follows:
$$\langle f,g \rangle=\frac{1}{q^n}\sum_{x\in \mathbb{Z}_{q}^n}f(x)\overline{g(x)}.$$

For every $0\leq k\leq n$, we denote $$U_k(n,q)=\{f:\mathbb{Z}_{q}^n\rightarrow \mathbb{C}: A_{n,q}f=\lambda_k(n,q)f\}.$$
In fact, $U_k(n,q)$ is a complex eigenspace of $A_{n,q}$ associated with $\lambda_k(n,q)$.
The direct sum of subspaces
$$U_k(n,q)\oplus U_{k+1}(n,q)\oplus\ldots\oplus U_m(n,q),$$ where $0\leq k\leq m\leq n$, is denoted by $U_{[k,m]}(n,q)$.

For every $u\in \mathbb{Z}_{q}^n$, define a {\em character} $\chi_u$ on $\mathbb{Z}_{q}^n$ by $\chi_{u}(x)=\omega^{\langle u,x\rangle}$, where 
$\omega=e^{2pi/q}$ is a primitive $q$th root of unity and
$\langle u,x\rangle=u_1x_1+\cdots+u_nx_n$.
In what follows, we will need the following two properties of the characters.

\begin{remark}\label{Rem:1}
The following statements hold:
\begin{enumerate}
  \item  The set $\{\chi_u:u\in \mathbb{Z}_{q}^n\}$ forms an orthonormal basis of the vector space $U(n,q)$.
  \item The equality $A_{n,q}\chi_u=\lambda_u\chi_u$, where $\lambda_u=n(q-1)-q\cdot |u|$, holds for any vector $u\in \mathbb{Z}_{q}^n$.
\end{enumerate}
\end{remark}

Every function $f\in U(n,q)$ is uniquely expressible as
$$f=\sum_{u\in \mathbb{Z}_{q}^n}\widehat{f}(u)\chi_u,$$
where $\widehat{f}(u)\in \mathbb{C}$ for all $u\in \mathbb{Z}_{q}^n$. The complex numbers $\widehat{f}(u)$, where $u\in \mathbb{Z}_{q}^n$, are called the {\em Fourier coefficients} of $f$.
The {\em spectrum} of a function $f\in U(n,q)$ is the set 
$\{|u|:u\in \mathbb{Z}_q^n~\mathrm{and}~\widehat{f}(u)\neq 0\}$.
The {\em degree} of a function $f\in U(n,q)$ is $\max\{|u|:u\in \mathbb{Z}_q^n~\mathrm{and}~\widehat{f}(u)\neq 0\}$.

Let $f\in U(n,q)$, let $i\in [n]$ and let $a,b\in \mathbb{Z}_q$, where $a\neq b$.
We define a function $f_{i,a,b}$ on $\mathbb{Z}_{q}^{n-1}$ as follows:
for any vector $y=(y_1,\ldots,y_{i-1},y_{i+1},\ldots,y_n)$ from $\mathbb{Z}_{q}^{n-1}$
$$f_{i,a,b}(y)=f(y_1,\ldots,y_{i-1},a,y_{i+1},\ldots,y_n)-f(y_1,\ldots,y_{i-1},b,y_{i+1},\ldots,y_n).$$

Let $f\in U(n,q)$, let $i\in [n]$ and let $a,b\in \mathbb{Z}_q$, where $a\neq b$.
Denote by $\nu(f)$ the number of edges $\{x,y\}\in E_{n,q}$ such that $f(x)\neq f(y)$; and 
denote by $\nu_{i,a,b}(f)$ the number of edges $\{x,y\}\in E_{n,q}$ such that $x_i=a$, $y_i=b$ and $f(x)\neq f(y)$.

Note that 
$$\nu(f)=\sum_{\substack{i\in [n]\\ a,b\in \mathbb{Z}_q:a<b}}\nu_{i,a,b}(f).$$
On the other hand, we have $\nu_{i,a,b}(f)=|S(f_{i,a,b})|$.
Therefore,

\begin{equation}\label{Eq:1}
\nu(f)=\sum_{\substack{i\in [n]\\ a,b\in \mathbb{Z}_q:a<b}}|S(f_{i,a,b})|.
\end{equation}

Given a function $f$ on $\mathbb{Z}_{q}^n$, a variable $x_i$, $i\in [n]$, is called {\em relevant} if there exist
$a_1,\ldots,a_{i-1},a_{i+1},\ldots,a_n\in \mathbb{Z}_q$ and $b_1,b_2\in \mathbb{Z}_q$ such that
$$f(a_1,\ldots,a_{i-1},b_1,a_{i+1},\ldots,a_n)\neq f(a_1,\ldots,a_{i-1},b_2,a_{i+1},\ldots,a_n).$$

Given a function $f$ on $\mathbb{Z}_{q}^n$, an index $i\in [n]$ is called {\em relevant} if the corresponding variable $x_i$ is relevant.

\section{Preliminaries}\label{Sec:Prelim}
The following two formulas can be proved by standard methods (the proof for $q=2$ can be found in \cite{NS94}). We give the proof for completeness.
\begin{lemma}\label{L:X(f)}
For any function $f:\mathbb{Z}_q^n\rightarrow \{-1,1\}$, the following equalities hold:
\begin{gather} 
\sum_{u\in \mathbb{Z}_q^n}|\widehat{f}(u)|^2=1, \label{Eq:2} \\ 
\nu(f)=\frac{q^{n+1}}{4}\sum_{u\in \mathbb{Z}_q^n}|u|\cdot |\widehat{f}(u)|^2. \label{Eq:3}
\end{gather}

\end{lemma}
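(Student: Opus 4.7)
The plan has two independent parts, corresponding to the two identities, and both are standard Parseval/Plancherel-style arguments relative to the character basis of Remark~\ref{Rem:1}.

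For~\eqref{Eq:2}, I would simply note that since $f$ takes values in $\{-1,1\}$ we have $|f(x)|^2=1$ for every $x$, so $\langle f,f\rangle=\frac{1}{q^n}\sum_{x}|f(x)|^2=1$. Expanding $f=\sum_u \widehat{f}(u)\chi_u$ and using the orthonormality of the characters (Remark~\ref{Rem:1}, item~1) gives $\langle f,f\rangle=\sum_u |\widehat{f}(u)|^2$, which is exactly~\eqref{Eq:2}.

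For~\eqref{Eq:3}, the idea is to rewrite $\nu(f)$ as a spectral quantity. Because $f$ is $\{-1,1\}$-valued, $\frac{1-f(x)f(y)}{2}$ is the indicator that $f(x)\neq f(y)$, hence
\[
\nu(f)=\tfrac{1}{2}|E_{n,q}|-\tfrac{1}{2}\sum_{\{x,y\}\in E_{n,q}}f(x)f(y).
\]
I would then convert the edge sum into an ordered-pair sum over adjacencies and recognize the result as $\tfrac{q^n}{2}\langle A_{n,q}f,f\rangle$. Expanding $f$ in the character basis and invoking Remark~\ref{Rem:1}, item~2, gives $\langle A_{n,q}f,f\rangle=\sum_u \lambda_u|\widehat{f}(u)|^2$. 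Substituting $\lambda_u=n(q-1)-q|u|$ and $|E_{n,q}|=\tfrac{n(q-1)q^n}{2}$, and using~\eqref{Eq:2} to absorb the constant $n(q-1)$ term, the two contributions collapse into $\nu(f)=\tfrac{q^{n+1}}{4}\sum_u |u|\cdot |\widehat{f}(u)|^2$, which is~\eqref{Eq:3}.

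There is no genuine obstacle here; both identities follow from orthonormality plus the eigenvalue relation. The only bookkeeping that requires a little care is the factor of $2$ between summing over edges and summing over ordered adjacent pairs, and the cancellation of the $n(q-1)$ term with the corresponding summand of $\lambda_u$ after applying~\eqref{Eq:2}.
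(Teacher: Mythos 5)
Your proposal is correct and follows essentially the same route as the paper: both identities are obtained from orthonormality of the characters and the eigenvalue relation $A_{n,q}\chi_u=\lambda_u\chi_u$, with $\nu(f)$ linked to $\langle A_{n,q}f,f\rangle$ via $\sum_{\{x,y\}\in E_{n,q}}f(x)f(y)$ and the constant $n(q-1)$ cancelled using \eqref{Eq:2}. The only cosmetic difference is that the paper equates two expressions for $\langle A_{n,q}f,f\rangle$ and subtracts, whereas you solve for $\nu(f)$ directly; the bookkeeping (factor of $2$ for edges versus ordered pairs, $|E_{n,q}|=n(q-1)q^n/2$) is handled correctly.
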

\begin{proof}

By Remark \ref{Rem:1}, we have

$$\langle f,f \rangle=\langle \sum_{u\in \mathbb{Z}_q^n}\widehat{f}(u)\chi_u, \sum_{u\in \mathbb{Z}_q^n}\widehat{f}(u)\chi_u \rangle=\sum_{u\in \mathbb{Z}_q^n}|\widehat{f}(u)|^2.$$

On the other hand, $$\langle f,f \rangle=\frac{1}{q^n}\sum_{x\in \mathbb{Z}_q^n}|f(x)|^2=1.$$
This immediately implies the equality (\ref{Eq:2}).
Using Remark \ref{Rem:1}, we obtain

\begin{equation}\label{Eq:4}
\begin{split}
& \langle A_{n,q}f,f \rangle=\langle \sum_{u\in \mathbb{Z}_q^n}\lambda_u\widehat{f}(u)\chi_u, \sum_{u\in \mathbb{Z}_q^n}\widehat{f}(u)\chi_u \rangle=
\sum_{u\in \mathbb{Z}_q^n}\lambda_u\cdot|\widehat{f}(u)|^2= \\
& =n(q-1)-q\sum_{u\in \mathbb{Z}_q^n}|u|\cdot |\widehat{f}(u)|^2.
\end{split}
\end{equation}

On the other hand, 
\begin{equation}\label{Eq:5}
\langle A_{n,q}f,f \rangle=\frac{2}{q^n}\sum_{\{x,y\}\in E_{n,q}}f(x)f(y)=n(q-1)-\frac{4\nu(f)}{q^n}.
\end{equation}

Subtracting (\ref{Eq:5}) from (\ref{Eq:4}), we get the equality (\ref{Eq:3}).
\end{proof}

Using Lemma \ref{L:X(f)} for Boolean functions of degree at most $d$, we immediately obtain the following result.

\begin{corollary}\label{Cor:X(f)}
Let $f$ be a Boolean function of degree at most $d$ on $\mathbb{Z}_{q}^n$. Then $\nu(f)\leq \frac{d}{4}\cdot q^{n+1}$.
\end{corollary}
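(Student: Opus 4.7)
The plan is to apply Lemma~\ref{L:X(f)} directly, combined with the degree hypothesis. By the definition of degree, $\widehat{f}(u) = 0$ whenever $|u| > d$, so the sum in equation~(\ref{Eq:3}) is supported on vectors $u$ with $|u| \leq d$.

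On that support I would bound $|u|$ by $d$ termwise, which gives
\[
\nu(f) \;=\; \frac{q^{n+1}}{4}\sum_{u\in \mathbb{Z}_q^n}|u|\cdot|\widehat{f}(u)|^2 \;\leq\; \frac{d\,q^{n+1}}{4}\sum_{u\in \mathbb{Z}_q^n}|\widehat{f}(u)|^2.
\]
Then I would invoke Parseval's identity in the form~(\ref{Eq:2}) to replace the remaining sum by $1$, yielding $\nu(f)\leq \frac{d}{4}\cdot q^{n+1}$.

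There is no real obstacle here; this is a one-line consequence of Lemma~\ref{L:X(f)}. The only thing to be careful about is making sure the degree hypothesis is applied correctly (namely, that vectors $u$ with $\widehat{f}(u) \neq 0$ really do satisfy $|u|\leq d$, which is exactly the definition of $f$ having degree at most $d$).
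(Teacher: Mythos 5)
Your proof is correct and is exactly the argument the paper has in mind: the paper simply states that the corollary follows "immediately" from Lemma~\ref{L:X(f)}, and the intended derivation is precisely your termwise bound $|u|\leq d$ on the support of $\widehat{f}$ in (\ref{Eq:3}) followed by Parseval's identity (\ref{Eq:2}). (If one reads ``Boolean'' as $\{0,1\}$-valued rather than $\{-1,1\}$-valued, one should first pass to $2f-1$, which leaves $\nu$ and the degree bound unchanged; the paper glosses over this too.)
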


The following lemma shows the connection between the eigenspaces of the Hamming graphs $H(n,q)$ and $H(n-1,q)$.
\begin{lemma}\label{L:Reduction}
Let $f\in{U_{[k,m]}(n,q)}$, let $i\in [n]$, and let $a,b\in \mathbb{Z}_q$, $a\neq b$. Then $f_{i,a,b}\in U_{[k-1,m-1]}(n-1,q)$.
\end{lemma}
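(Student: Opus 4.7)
The plan is to work directly with the Fourier expansion. Since $f\in U_{[k,m]}(n,q)$, we can write
$$f=\sum_{u\in\mathbb{Z}_q^n}\widehat{f}(u)\chi_u$$
with $\widehat{f}(u)=0$ whenever $|u|\notin\{k,k+1,\ldots,m\}$. The key observation is that fixing the $i$-th coordinate of a character factors multiplicatively: for $u\in\mathbb{Z}_q^n$, $c\in\mathbb{Z}_q$, and $u'=(u_1,\ldots,u_{i-1},u_{i+1},\ldots,u_n)\in\mathbb{Z}_q^{n-1}$, we have
$$\chi_u(y_1,\ldots,y_{i-1},c,y_{i+1},\ldots,y_n)=\omega^{u_i c}\,\chi_{u'}(y)$$
for every $y\in\mathbb{Z}_q^{n-1}$.

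Subtracting the values at $c=a$ and $c=b$ and summing over $u$ yields
$$f_{i,a,b}(y)=\sum_{u\in\mathbb{Z}_q^n}\widehat{f}(u)\bigl(\omega^{u_i a}-\omega^{u_i b}\bigr)\chi_{u'}(y).$$
The terms with $u_i=0$ vanish automatically, so the sum is effectively restricted to $u$ with $u_i\neq 0$. Regrouping by the image $u'\in\mathbb{Z}_q^{n-1}$ — each $v\in\mathbb{Z}_q^{n-1}$ has exactly $q-1$ preimages $u$ with $u_i\neq 0$ — produces the Fourier expansion of $f_{i,a,b}$ on $\mathbb{Z}_q^{n-1}$, with
$$\widehat{f_{i,a,b}}(v)=\sum_{c\in\mathbb{Z}_q\setminus\{0\}}\widehat{f}\bigl(u^{(v,c)}\bigr)\bigl(\omega^{ca}-\omega^{cb}\bigr),$$
where $u^{(v,c)}$ denotes the vector obtained from $v$ by inserting $c$ at position $i$.

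It then remains to track weights. Any $u$ contributing to the sum satisfies $|u|\in[k,m]$ by hypothesis and $u_i\neq 0$, so $|u'|=|u|-1\in[k-1,m-1]$. Equivalently, if $|v|\notin[k-1,m-1]$ then $|u^{(v,c)}|=|v|+1\notin[k,m]$ for every $c\neq 0$, hence $\widehat{f}(u^{(v,c)})=0$ and therefore $\widehat{f_{i,a,b}}(v)=0$. By Remark \ref{Rem:1}, this is precisely the statement $f_{i,a,b}\in U_{[k-1,m-1]}(n-1,q)$.

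There is essentially no real obstacle here: the argument is a routine bookkeeping computation with characters. The only subtlety worth flagging is that when $q$ is composite the scalar $\omega^{u_i a}-\omega^{u_i b}$ may vanish for some $u_i\neq 0$ (whenever $u_i(a-b)\equiv 0 \pmod q$); this causes no issue, since such terms simply drop out of the sum and only sharpen the conclusion on the support of $\widehat{f_{i,a,b}}$.
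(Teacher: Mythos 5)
Your argument is correct and complete: the factorization $\chi_u(\ldots,c,\ldots)=\omega^{u_ic}\chi_{u'}(y)$, the automatic vanishing of the $u_i=0$ terms, and the weight shift $|u'|=|u|-1$ together give exactly the claimed containment, and your remark that $\omega^{ca}-\omega^{cb}$ may vanish for composite $q$ is a valid (and harmless) observation. The paper does not prove this lemma itself but defers to \cite[Lemma 4]{VV19}; your character-based computation is the standard proof of that statement, so there is nothing further to compare.
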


The proof of Lemma \ref{L:Reduction} can be found, for example, in \cite[Lemma 4]{VV19}.
The following lower bound on the support size of functions on $\mathbb{Z}_{q}^n$ was proved in \cite{VV19}.
\begin{theorem}\label{Th:Bound}

Let $f\in{U_{[k,m]}(n,q)}$, where $q\geq 3$, $k+m\le n$ and $f\not\equiv 0$. Then $$|S(f)|\geq 2^{k}\cdot (q-1)^{k}\cdot q^{n-k-m}.$$
\end{theorem}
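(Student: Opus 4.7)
My plan is to prove the theorem by induction on $k$, using Lemma~\ref{L:Reduction} as the reduction step. The base case $k = 0$ asserts that any nonzero function of degree at most $m$ on $\mathbb{Z}_q^n$ has support of size at least $q^{n-m}$; I would prove this by a separate induction on $n$. When $n = m$, the bound is trivial. When $n > m$, if $f$ is a nonzero constant then $|S(f)| = q^n$, and otherwise I pick a coordinate $i$ on which $f$ depends and write the partial-Fourier decomposition $f(x) = \sum_{j \in \mathbb{Z}_q} a_j(x_{\ne i}) \omega^{j x_i}$. One checks directly that $a_j \in U_{[0, m-1]}(n-1, q)$ for every $j \ne 0$, and at least one such $a_{j^*}$ with $j^* \ne 0$ is nonzero because $x_i$ is relevant. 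The inner inductive hypothesis gives $|S(a_{j^*})| \ge q^{(n-1)-(m-1)} = q^{n-m}$, and since $a_{j^*}(y) \ne 0$ forces $f(y, \cdot) \not\equiv 0$, each $y \in S(a_{j^*})$ contributes at least one point to $S(f)$.

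For the inductive step with $k \ge 1$, take $f \in U_{[k, m]}(n, q) \setminus \{0\}$ with $k + m \le n$. Lemma~\ref{L:Reduction} gives $f_{i, a, b} \in U_{[k-1, m-1]}(n-1, q)$ for every triple $(i, a, b)$, and since $(k-1) + (m-1) \le n - 1$, the outer inductive hypothesis yields $|S(f_{i, a, b})| \ge 2^{k-1}(q-1)^{k-1} q^{n-k-m+1}$ whenever $f_{i, a, b} \not\equiv 0$. Since $k \ge 1$, the function $f$ is orthogonal to constants, so it must depend nontrivially on some coordinate $i$. Analyzing the partial characters $p_{u'}(a) = \sum_v \widehat{f}(u', v) \omega^{v a}$ shows that for this $i$ at least $q - 1$ unordered pairs $\{a, b\}$ give $f_{i, a, b} \not\equiv 0$: if $\widehat{f}(u', v) \ne 0$ for some $v \ne 0$ then $p_{u'}$ is non-constant on $\mathbb{Z}_q$, and a non-constant function on $\mathbb{Z}_q$ takes different values on at least $q - 1$ unordered pairs. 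I would then sum the inductive lower bounds over these pairs and invert the fiber inequality $\sum_{a < b} |S(f_{i, a, b})| \le (q - 1) |S(f)|$, which follows from $|S(f_{i, a, b})| \le S_a^{(i)} + S_b^{(i)}$ with $S_c^{(i)} = |\{y : (y, c) \in S(f)\}|$ and $\sum_c S_c^{(i)} = |S(f)|$.

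The main obstacle is obtaining the exact constant. The argument just sketched yields $|S(f)| \ge 2^{k-1}(q-1)^{k-1} q^{n-k-m+1}$, which falls short of the target $2^k(q-1)^k q^{n-k-m}$ by exactly the factor $q/(2(q-1))$. Recovering the missing factor $2(q-1)/q$ is the delicate part, and I expect it to require a refined fiber analysis that accounts for the full level-set partition of $h_y(t) = f(y, t)$ rather than only its support size, together with a careful tracking of contributions from different directions. An alternative route would be to iterate Lemma~\ref{L:Reduction} along $k$ distinct coordinates to reduce to $U_{[0, m-k]}(n - k, q)$, apply the base-case bound to this $k$-fold difference, and then count the coordinate-pair chains $(i_1, a_1, b_1; \ldots; i_k, a_k, b_k)$ that produce a nonzero result with enough precision to recover the factor $(2(q-1))^k$.
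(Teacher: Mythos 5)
You have correctly identified the skeleton that makes this kind of bound plausible, but the proof as proposed does not establish the stated inequality, and you say so yourself. The problem is that your inductive step does not close. From the hypothesis at level $k-1$ you obtain, for a relevant coordinate $i$, at least $q-1$ pairs with $|S(f_{i,a,b})|\geq 2^{k-1}(q-1)^{k-1}q^{n-k-m+1}$, and the fiber inequality $\sum_{a<b}|S(f_{i,a,b})|\leq (q-1)|S(f)|$ then returns $|S(f)|\geq 2^{k-1}(q-1)^{k-1}q^{n-k-m+1}$. To prove the level-$k$ claim you would need this to dominate $2^{k}(q-1)^{k}q^{n-k-m}$, i.e.\ $q\geq 2(q-1)$, which fails for every $q\geq 3$. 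Worse, if you run the induction honestly with whatever constant it actually supports (say $|S(f)|\geq C_k\,q^{n-k-m}$), the recursion gives $C_k=qC_{k-1}$, so $C_k=q^k$ and the whole argument collapses to the trivial degree bound $|S(f)|\geq q^{n-m}$. The entire content of the theorem is the improvement of $q^k$ to $2^k(q-1)^k$ (which is sharp: a product of $k$ copies of $\mathbf{1}[x_1=0]-\mathbf{1}[x_2=0]$ on disjoint pairs of coordinates attains it), and that is exactly the part your sketch defers to "a refined fiber analysis" or a chain-counting argument without carrying either out. Your two proposed repairs are restatements of the difficulty rather than solutions: the $q-1$ count of nonzero pairs and the $(q-1)$ in the fiber inequality cancel exactly, and gaining the factor $2(q-1)/q$ per level requires genuinely new information about how the supports of the various $f_{i,a,b}$ overlap inside the fibers $x_i=c$.

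For the record, this paper does not prove Theorem~\ref{Th:Bound} at all; it is imported from \cite{VV19}, so there is no internal proof to compare against. The argument there is considerably more involved than your sketch: it works with the restrictions $f|_{x_i=a}$ and the decomposition $f=\sum_j g_j\,\omega^{jx_i}$ with $g_0\in U_{[k,m]}(n-1,q)$ and $g_j\in U_{[k-1,m-1]}(n-1,q)$ for $j\neq 0$, and splits into cases according to the partition of $\mathbb{Z}_q$ induced by the relation $f_{i,a,b}\equiv 0$ (the same partition that appears in the proof of Theorem~\ref{Th:Main}), extracting the factor $2(q-1)$ from the two smallest parts of that partition rather than from an averaged fiber count. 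Your base case ($k=0$, giving $|S(f)|\geq q^{n-m}$ for nonzero $f$ of degree at most $m$) and all the auxiliary inequalities you state are correct; the gap is solely, but fatally, in the inductive step.
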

We note that the bound from Theorem \ref{Th:Bound} is sharp.

\section{Main results}\label{Sec:Main}
The main result of this paper is the following.
\begin{theorem}\label{Th:Main}
Let $f\in U_0(n,q)\oplus U_{[d',d]}(n,q)$, where $q\geq 3$, $1\leq d'\leq d\leq n$ and $d'+d\leq n+1$.
If $f$ is Boolean, then $f$ has at most $\frac{d}{2}\cdot \frac{q^{d+d'}}{2^{d'}(q-1)^{d'}}$ relevant variables.
\end{theorem}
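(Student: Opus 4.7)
The plan is to combine two antipodal estimates on $\nu(f)$: the universal upper bound from Corollary~\ref{Cor:X(f)}, which comes from $f$ having degree at most $d$, and a per-relevant-variable lower bound coming from the support inequality in Theorem~\ref{Th:Bound} applied to each restriction $f_{i,a,b}$.

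First, write $f = c\cdot\chi_{0} + g$ with $g\in U_{[d',d]}(n,q)$. Since the constant part cancels in every difference, $f_{i,a,b} = g_{i,a,b}$, and Lemma~\ref{L:Reduction} places this function in $U_{[d'-1,\,d-1]}(n-1,q)$. The hypothesis $d'+d\leq n+1$ is exactly the inequality $(d'-1)+(d-1)\leq n-1$ needed to apply Theorem~\ref{Th:Bound} on $H(n-1,q)$. Hence, whenever $f_{i,a,b}\not\equiv 0$,
\begin{equation*}
|S(f_{i,a,b})|\ \geq\ 2^{d'-1}(q-1)^{d'-1}\,q^{(n-1)-(d'-1)-(d-1)}\ =\ 2^{d'-1}(q-1)^{d'-1}\,q^{n-d-d'+1}.
\end{equation*}

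Next, use the definition of a relevant variable: if $x_i$ is relevant, there exist $a\neq b$ with $f_{i,a,b}\not\equiv 0$, so at least one pair $(a,b)$ with $a<b$ contributes a term of size at least the bound above to the sum~(\ref{Eq:1}). If $r$ denotes the number of relevant variables, then
\begin{equation*}
\nu(f)\ =\ \sum_{\substack{i\in[n]\\ a<b}}|S(f_{i,a,b})|\ \geq\ r\cdot 2^{d'-1}(q-1)^{d'-1}\,q^{n-d-d'+1}.
\end{equation*}
Combining with Corollary~\ref{Cor:X(f)}, $\nu(f)\leq \tfrac{d}{4}q^{n+1}$, and rearranging gives
\begin{equation*}
r\ \leq\ \frac{d}{4}\cdot\frac{q^{n+1}}{2^{d'-1}(q-1)^{d'-1}q^{n-d-d'+1}}\ =\ \frac{d}{2}\cdot\frac{q^{d+d'}}{2^{d'}(q-1)^{d'}},
\end{equation*}
which is exactly the claimed bound.

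There is no real obstacle once the ingredients are identified; everything is bookkeeping. The only step that merits care is verifying the index shifts: $f_{i,a,b}$ lives on $H(n-1,q)$ with spectrum window $[d'-1,d-1]$, and the support lower bound must be read with $k=d'-1$, $m=d-1$, so the exponent of $q$ becomes $n-d-d'+1$. The edge case $d'=1$ is harmless since $k=0$ is allowed in Theorem~\ref{Th:Bound} and the factor $2^{k}(q-1)^{k}$ correctly becomes $1$.
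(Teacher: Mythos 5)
Your overall strategy is the paper's strategy, but there is a genuine gap: the final displayed ``equality'' is false, and the missing factor is exactly where the paper does extra work. From your lower bound $\nu(f)\geq r\cdot 2^{d'-1}(q-1)^{d'-1}q^{\,n-d-d'+1}$ together with $\nu(f)\leq\frac{d}{4}q^{n+1}$ you actually get
\[
r\ \leq\ \frac{d}{4}\cdot\frac{q^{n+1}}{2^{d'-1}(q-1)^{d'-1}q^{\,n-d-d'+1}}\ =\ \frac{d}{2}\cdot\frac{q^{d+d'}}{2^{d'}(q-1)^{d'-1}},
\]
which is weaker than the claimed bound by a factor of $q-1$. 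The slip is that you credit each relevant variable with only \emph{one} pair $(a,b)$ for which $f_{i,a,b}\not\equiv 0$. That is all the definition of relevance guarantees directly, but it is not enough.

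The missing idea is that the relation ``$f_{i,a,b}\equiv 0$'' is an equivalence relation on $\mathbb{Z}_q$ (transitivity follows from $f_{i,a,c}=f_{i,a,b}+f_{i,b,c}$), so it induces a partition $A_1\cup\cdots\cup A_t$ of $\mathbb{Z}_q$ with $t\geq 2$ when $i$ is relevant. The number of pairs $a<b$ lying in different classes is $\sum_{r<s}|A_r||A_s|\geq q-1$, so a relevant index contributes at least $q-1$ nonzero restrictions $f_{i,a,b}$, each with support at least $2^{d'-1}(q-1)^{d'-1}q^{\,n-d-d'+1}$. This upgrades the per-index contribution to $2^{d'-1}(q-1)^{d'}q^{\,n-d-d'+1}$ and yields the stated bound. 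Everything else in your write-up --- the reduction via Lemma~\ref{L:Reduction}, the role of $d'+d\leq n+1$, and the application of Theorem~\ref{Th:Bound} with $k=d'-1$, $m=d-1$ --- is correct.
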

\begin{proof}
Suppose $f$ has $n'$ relevant variables.
Let us consider a relevant index $i\in [n]$. Define a partition $A_1\cup\ldots \cup A_t$ of $\mathbb{Z}_q$ as follows:
$a,b\in A_r$ for some $r\in [t]$ if and only if $f_{i,a,b}\equiv 0$.

Since $i$ is relevant, we have $t\geq 2$.
For every $r\in [t]$, denote $q_r=|A_r|$. Note that

\begin{equation}\label{Eq:6}
|\{(a,b):a,b\in \mathbb{Z}_q,a<b,f_{i,a,b}\not\equiv 0\}|=\sum_{1\leq r<s\leq t}q_rq_s\geq q_1(q_2+\cdots+q_t)\geq q-1 . 
\end{equation}

By Lemma \ref{L:Reduction}, we have $f_{i,a,b}\in U_{[d'-1,d-1]}(n-1,q)$ for all $a,b\in \mathbb{Z}_q$, where $a\neq b$.
Applying Theorem \ref{Th:Bound} for $f_{i,a,b}$, where $a\neq b$, we get
\begin{equation}\label{Eq:7}
|S(f_{i,a,b})|\geq 2^{d'-1}\cdot (q-1)^{d'-1}\cdot q^{n-d'-d+1}
\end{equation}
or  $f_{i,a,b}\equiv 0$.
Combining (\ref{Eq:6}) and (\ref{Eq:7}), we obtain that the inequality
\begin{equation}\label{Eq:8}
\sum_{a,b\in \mathbb{Z}_q:a<b}|S(f_{i,a,b})|\geq 2^{d'-1}\cdot (q-1)^{d'}\cdot q^{n-d'-d+1}
\end{equation}
holds for any relevant index $i\in [n]$.

Using the equality (\ref{Eq:1}) and the inequality (\ref{Eq:8}), we obtain the following lower bound on $\nu(f)$:

$$\nu(f)\geq n'\cdot 2^{d'-1}\cdot (q-1)^{d'}\cdot q^{n-d'-d+1}.$$

On the other hand, we have $\nu(f)\leq \frac{d}{4}\cdot q^{n+1}$ due to Corollary \ref{Cor:X(f)}. Therefore,

$$n'\leq \frac{d}{2}\cdot \frac{q^{d+d'}}{2^{d'}(q-1)^{d'}}.$$

\end{proof}

Applying Theorem \ref{Th:Main} for $d'=1$ and $d'=d$ respectively, we obtain the following results.

\begin{corollary}\label{Cor:2}
Any Boolean function of degree $d$ on $\mathbb{Z}_{q}^n$, $q\geq 3$, has at most $\frac{dq^{d+1}}{4(q-1)}$ relevant variables.
\end{corollary}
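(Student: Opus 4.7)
The plan is to obtain this corollary as an immediate specialization of Theorem \ref{Th:Main} with $d' = 1$. First I would note that any Boolean function $f$ of degree $d$ on $\mathbb{Z}_q^n$ has, by the very definition of degree, all of its nonzero Fourier coefficients $\widehat{f}(u)$ supported on vectors with $|u| \in \{0, 1, \ldots, d\}$. Consequently, $f$ lies in the subspace $U_0(n,q) \oplus U_{[1,d]}(n,q)$, which is precisely the hypothesis of Theorem \ref{Th:Main} under the choice $d' = 1$.

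Next I would verify that the arithmetic condition $d' + d \leq n + 1$ of Theorem \ref{Th:Main} specializes to $d \leq n$ when $d' = 1$; this holds automatically, since any function on $\mathbb{Z}_q^n$ has degree at most $n$. Finally, substituting $d' = 1$ into the bound $\frac{d}{2} \cdot \frac{q^{d+d'}}{2^{d'}(q-1)^{d'}}$ and simplifying $2^{1}(q-1)^{1} = 2(q-1)$ yields
$$\frac{d}{2} \cdot \frac{q^{d+1}}{2(q-1)} \; = \; \frac{d\,q^{d+1}}{4(q-1)},$$
which is exactly the claimed bound.

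There is essentially no obstacle, since the corollary is a one-line specialization: all the genuine content is already packaged inside Theorem \ref{Th:Main}, whose proof in turn rests on the support lower bound of Theorem \ref{Th:Bound}, the eigenspace-reduction Lemma \ref{L:Reduction}, and the edge-count Corollary \ref{Cor:X(f)}. The only point worth a sentence of verification is the spectral containment $f \in U_0(n,q) \oplus U_{[1,d]}(n,q)$, and this is immediate from the definition of \emph{degree} given in Section \ref{Sec:Def}.
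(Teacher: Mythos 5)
Your proposal is correct and matches the paper exactly: the paper also obtains Corollary \ref{Cor:2} by applying Theorem \ref{Th:Main} with $d'=1$, and your verification of the spectral containment, the condition $1+d\leq n+1$, and the arithmetic simplification is accurate.
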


\begin{corollary}\label{Cor:3}
Let $f\in U_0(n,q)\oplus U_d(n,q)$, where $q\geq 3$ and $1\leq d\leq \frac{n+1}{2}$.
If $f$ is Boolean, then $f$ has at most $\frac{d}{2}\cdot \frac{q^{2d}}{2^d(q-1)^{d}}$ relevant variables.
\end{corollary}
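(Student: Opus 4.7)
The plan is to combine a double-counting of ``disagreeing'' edges with the sharp support lower bound of Theorem \ref{Th:Bound}. I would read identity (\ref{Eq:1}) as telling me that $\nu(f) = \sum_{i,a<b}|S(f_{i,a,b})|$, and then sandwich $\nu(f)$: from above via Corollary \ref{Cor:X(f)}, which gives $\nu(f)\leq \frac{d}{4}q^{n+1}$ because $f$ has degree at most $d$; from below by showing that every relevant index contributes a guaranteed amount to this sum.

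To get the per-coordinate lower bound, for a relevant $i\in[n]$ I would introduce the relation $a\sim b$ on $\mathbb{Z}_q$ defined by $f_{i,a,b}\equiv 0$. Reflexivity, symmetry, and transitivity follow directly from the definition of $f_{i,a,b}$ (writing $f_{i,a,c}=f_{i,a,b}+f_{i,b,c}$), so $\sim$ partitions $\mathbb{Z}_q$ into classes $A_1,\ldots,A_t$, and relevance of $i$ forces $t\geq 2$. The number of unordered pairs with $f_{i,a,b}\not\equiv 0$ is then $\sum_{r<s}|A_r||A_s|$, which I would bound from below by $q-1$ via the elementary observation that, subject to $|A_1|+\cdots+|A_t|=q$ and $t\geq 2$, this sum is minimized at the partition $\{1,q-1\}$.

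For each such nonzero $f_{i,a,b}$, Lemma \ref{L:Reduction} gives $f_{i,a,b}\in U_{[d'-1,d-1]}(n-1,q)$. The hypothesis $d'+d\leq n+1$ is exactly $(d'-1)+(d-1)\leq n-1$, so Theorem \ref{Th:Bound} applies and yields $|S(f_{i,a,b})|\geq 2^{d'-1}(q-1)^{d'-1}q^{n-d'-d+1}$. Multiplying this by the $\geq q-1$ nonzero pairs gives the per-coordinate contribution $2^{d'-1}(q-1)^{d'}q^{n-d'-d+1}$; summing over the $n'$ relevant indices and equating with the upper bound on $\nu(f)$ delivers the claimed inequality after routine algebra.

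The step I expect to be the real crux is the combinatorial lower bound of $q-1$ on the number of disagreeing pairs per relevant coordinate: everything else plugs into pre-existing machinery, but it is this factor of $q-1$ that propagates through the estimate to produce the denominator $(q-1)^{d'}$, and hence governs the final exponent. The only other point I would double-check is that the dimensional hypothesis $d'+d\leq n+1$ is used precisely where Theorem \ref{Th:Bound} is invoked on $H(n-1,q)$, since without it the support bound is unavailable and the whole argument collapses.
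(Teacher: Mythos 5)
Your argument is correct and is essentially the paper's own: the paper obtains this corollary by specializing Theorem \ref{Th:Main} to $d'=d$ (so that $d'+d\leq n+1$ becomes $d\leq\frac{n+1}{2}$), and the proof of that theorem is exactly your chain of (\ref{Eq:1}), the equivalence-class partition of $\mathbb{Z}_q$ with the $\sum_{r<s}q_rq_s\geq q-1$ bound, Lemma \ref{L:Reduction}, Theorem \ref{Th:Bound}, and Corollary \ref{Cor:X(f)}. No gaps.
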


Corollary \ref{Cor:3} can be reformulated in terms of equitable $2$-partitions of $H(n,q)$.
Firstly, we recall some definitions.
Let $G$ be a graph. An $r$-partition $C_1\cup\ldots \cup C_r$ of the vertex set of $G$ is called {\em equitable} if
for any $i,j\in [r]$ there is a constant $s_{i,j}$ such that any vertex of $C_i$ has exactly $s_{i,j}$ neighbors in $C_j$.
The matrix $S=(s_{i,j})_{1\leq i,j\leq r}$ is the {\em quotient matrix} of the partition.
One of the important properties of equitable partitions is the following: 
the eigenvalues of the quotient matrix of an equitable partition of a graph $G$ are also eigenvalues of the adjacency matrix of $G$
(see, for example, \cite[Lemma 2.2]{G93}).
There are many different names in the literature for equitable partitions of graphs: perfect coloring, partition design, regular partition.
We also note that every class of an equitable 2-partition of a graph is a completely regular code of radius one.
An interesting discussion of equitable partitions and related concepts can be found in \cite{I:Blog}.

Fon-Der-Flaass initiated the systematic study of equitable 2-partitions of the Hamming graph in \cite{FDF07SMJ,FDF07Bound}
(he considered the case $q=2$).
In particular, he proposed several iterative constructions of equitable 2-partitions of $H(n,2)$.
He also proved several important necessary conditions for the existence of equitable 2-partitions of $H(n,2)$ with a given quotient matrix.
For more information on equitable 2-partitions of the Hamming graphs we refer the reader to a recent survey \cite{BKMTV21}.
Suppose $C_1\cup C_2$  is an equitable $2$-partition of $H(n,q)$ with the quotient matrix
$$S=\begin{pmatrix}
a & b\\
c & d\\
\end{pmatrix}.$$
The matrix $S$ has two different eigenvalues: the {\em trivial} eigenvalue $\theta_0=n(q-1)$ (the degree of $H(n,q)$) and the {\em second} eigenvalue $\theta_1=a-c$.
Moreover, we have $\theta_1=\lambda_k(n,q)$ for some $0\leq k\leq n$.
We say that an equitable $2$-partition of $H(n,q)$ has {\em degree} $d$ if the second eigenvalue of its quotient matrix is $\lambda_d(n,q)$.

Suppose that $C_1\cup C_2$ is an equitable $2$-partition of $H(n,q)$ of degree $d$.
We have $\mathbf{1}_{C_1}\in U_{0}(n,q)\oplus U_d(n,q)$, where $\mathbf{1}_{C_1}$ is the characteristic function of $C_1$ in $\mathbb{Z}_q^n$.
Applying Corollary \ref{Cor:3} to the function $\mathbf{1}_{C_1}$, we get the following.
\begin{corollary}\label{Cor:4}
Let $q\geq 3$ and $1\leq d\leq \frac{n+1}{2}$.
Then any equitable 2-partition of $H(n,q)$ of degree $d$ has at most $\frac{d}{2}\cdot \frac{q^{2d}}{2^d(q-1)^{d}}$ relevant variables.
\end{corollary}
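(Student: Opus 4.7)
The plan is to reduce Corollary \ref{Cor:4} to Corollary \ref{Cor:3} by attaching to each equitable 2-partition $C_1 \cup C_2$ of $H(n,q)$ of degree $d$ a Boolean function in $U_0(n,q) \oplus U_d(n,q)$ with the same set of relevant variables. The natural candidate is $g = \mathbf{1}_{C_1} - \mathbf{1}_{C_2}$, which takes values in $\{-1,+1\}$ and hence is Boolean in the sense of Lemma \ref{L:X(f)}.

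The one substantive step is verifying that $g \in U_0(n,q) \oplus U_d(n,q)$. Let $a,b,c,e$ denote the entries of the quotient matrix $S$, so that the second eigenvalue of $S$ is $\theta_1 = a - c = \lambda_d(n,q)$. The defining property of an equitable partition gives $A_{n,q}\mathbf{1}_{C_1} = a\mathbf{1}_{C_1} + c\mathbf{1}_{C_2}$ and $A_{n,q}\mathbf{1}_{C_2} = b\mathbf{1}_{C_1} + e\mathbf{1}_{C_2}$, so $A_{n,q}$ preserves the two-dimensional subspace spanned by $\mathbf{1}_{C_1}$ and $\mathbf{1}_{C_2}$ and acts on it with eigenvalues $\theta_0 = n(q-1)$ and $\theta_1 = \lambda_d(n,q)$. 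Decomposing $\mathbf{1}_{C_1}$ along the two corresponding eigenvectors places it in $U_0(n,q) \oplus U_d(n,q)$; writing $g = 2\mathbf{1}_{C_1} - 1$ preserves this containment because the constant function lies in $U_0(n,q)$.

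Finally, coordinate $i$ is relevant for the partition precisely when some edge of $H(n,q)$ in direction $i$ crosses between $C_1$ and $C_2$, which is the same condition as $g$ depending on $x_i$; so $g$ has exactly the relevant variables of the partition. Applying Corollary \ref{Cor:3} to $g$ then yields the announced bound $\frac{d}{2}\cdot \frac{q^{2d}}{2^d(q-1)^{d}}$. I do not expect a serious obstacle: the analytic content is already packaged in Corollary \ref{Cor:3}, and the work here amounts to the standard dictionary between equitable 2-partitions and Boolean eigenspace functions, together with the mild bookkeeping needed to pass from the $\{0,1\}$-valued $\mathbf{1}_{C_1}$ to the $\{-1,+1\}$-valued $g$ without leaving $U_0(n,q) \oplus U_d(n,q)$.
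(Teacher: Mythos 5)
Your proposal is correct and follows essentially the same route as the paper: the paper observes that $\mathbf{1}_{C_1}\in U_0(n,q)\oplus U_d(n,q)$ (via the standard quotient-matrix computation you spell out) and applies Corollary \ref{Cor:3} directly to $\mathbf{1}_{C_1}$; your passage to the $\{-1,+1\}$-valued $g=2\mathbf{1}_{C_1}-1$ is only a harmless normalization that the paper leaves implicit.
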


Finally, we compare the bounds from Theorem \ref{Th:Main}, Corollary \ref{Cor:2} and Corollary~\ref{Cor:3} with the bound $4.394\cdot 2^{\lceil \log_2 q\rceil d}$:

\begin{enumerate}
  
  \item If $d'>\frac{d}{8.788}\cdot \frac{q}{q-2}$, then the bound from Theorem \ref{Th:Main} is better than $4.394\cdot 2^{\lceil \log_2 q\rceil d}$ 
  for all $q\geq 3$ (we assume that $d'+d\leq n+1$).
  
  \item The bound from Corollary \ref{Cor:2} is better than $4.394\cdot 2^{\lceil \log_2 q\rceil d}$ in the following cases:

\begin{itemize}
  
  \item $q\in \{3,5,6,7\}$;
  \item $2^{k-1}<q\leq 15\cdot 2^{k-4}$, where $k\geq 4$.
\end{itemize}
  
  \item The bound from Corollary \ref{Cor:3} is better than $4.394\cdot 2^{\lceil \log_2 q\rceil d}$ for all $q\geq 3$ (we assume that $d\leq \frac{n+1}{2}$). 
\end{enumerate}

\end{document}